\newtheorem{theorem}{Theorem}
\newtheorem{corollary}{Corollary}[theorem]
\newtheorem{proposition}[theorem]{Proposition}
\theoremstyle{definition}
\newtheorem{definition}[theorem]{Definition}
\theoremstyle{remark}
\newtheorem*{remark}{Remark}
\numberwithin{equation}{section}
\newcommand{\h}{{\mathscr H }} 
\newcommand{\hand}{{\mathcal H }}
\begin{document}

\title{A Seifert algorithm for integral homology spheres.}

\author[Alegria and Menasco]{Linda V. Alegria and William W. Menasco}
\address{Department of Mathematics, University at Buffalo}
\email{lindaale@buffalo.edu}

\address{Department of Mathematics, University at Buffalo}
\email{menasco@buffalo.edu}

\keywords{Heegaaard splitting, Heegaard diagrams, homology spheres, 3-Manifolds, Seifert's algorithm, spanning surface.}

\date{\today}

\begin{abstract}

    From classical knot theory we know that every knot in $S^3$ is the boundary of an oriented, embedded surface.  A standard demonstration of this fact achieved by elementary technique comes from taking a regular projection of any knot and employing Seifert's constructive algorithm.  In this note we give a natural generalization of Seifert's algorithm to any closed integral homology 3-sphere.  The starting point of our algorithm is presenting the handle structure of a Heegaard splitting of a given integral homology sphere as a planar diagram on the boundary of a $3$-ball.  (For a well known example of such a planar presentation, see the Poincar\'e homology sphere planar presentation in {\em Knots and Links} by D. Rolfsen \cite{Rolfsen}.)  An oriented link can then be represented by the regular projection of an oriented $k$-strand tangle.  From there we give a natural way to find a ``Seifert circle" and associated half-twisted bands.
 
\end{abstract}

\maketitle

\section{Introduction}
\label{section: intro}

In Herbert Seifert's seminal paper, ``\"Uber das Geschlecht von Knoten'' \cite{Seifert}, a simple constructive algorithm is described that starts with a regular projection of a given oriented knot (or link) on an $S^2$ in $S^3$ and yields an oriented surface having the given knot (or link) as its boundary---notably the surface has a visible presentation that ``overlays'' the knot projection.  From there Seifert gives us a {\em tour de force}, defining a matrix that describes the homology relations of his {\em Seifert surface}, then utilizes this {\em Seifert matrix} to produce an efficient method of computing the Alexander polynomial.  So constructed, he then demonstrates that half the degree of the Alexander polynomial is a lower bound for the genus of the Seifert surface. Finally, he investigates the structure of polynomials that can be Alexander polynomials, even producing examples of non-trivial knots that have trivial Alexander polynomial.

The goal of this manuscript is to give a natural generalization of Seifert's constructive algorithm to any closed integral homology $3$-sphere.  To see the naturality of this generalization it is helpful to be more descriptive of Seifert's elegant algorithm.

Seifert's algorithm starts with a planar presentation of a genus-$0$ Heegaard diagram of $S^3$.  That is, a $2$-sphere that gives a handle decomposition---one $3$-dimensional $0$-handle and one $3$-dimensional $3$-handle.  The oriented knot in question is projected onto this $2$-sphere by a regular projection.  Utilizing the orientation of the knot, the crossings of the projection are resolved and the resultant oriented {\em Seifert circles} bound a collection of disjoint discs in the $0$-handle of $S^3$.  Thus, we have $2$-dimensional $0$-handles which are the start of a spanning surface positioned in the $0$-handle of our $3$-manifold's handle decomposition.  We recover the resolved crossing of the knot projection by attaching to this collection of disjoint discs half-twisted bands at each crossing.  The resulting spanning surface should then be thought of as being in ``pseudo-normal position''.  That is, although the surface's $0$-handles are well positioned with respect to the handle decomposition of $S^3$, this decomposition lacks $3$-dimensional $1$-handles in which the $2$-dimensional $1$-handles of Seifert's construction---the half-twisted bands---might be positioned.  Instead, these half-twisted bands are positioned arbitrarily close to the shared boundary sphere of the $0$-handle and $3$-handle in the handle decomposition of $S^3$.

Seifert's algorithm can be thought of as a demonstration that $S^3$ {\em is} a homology $3$-sphere, having trivial first integral homology. Our constructive algorithm will start with a given oriented link in any integral homology sphere, $M$.  The construction is based upon representing $M$ via a Heegaard splitting or, equivalently, a handle decomposition having exactly one $0$-handle.  An oriented knot, $K \subset M$, is then in normal position in the $0$- and $1$-handles of the decomposition.  (Please see \S\ref{section: definitions} for formal definition of handle decomposition and normal position.)  With respect to this normal position, if $K$ goes through the $1$-handles $n$ times, then $K$ can be represented by a regular projection of an $n$-tangle on the boundary of the unique $0$-handle.  If we are considering oriented links then we must allow the $n$-tangle to possibly have $S^1$ components that are totally contained in the unique $0$-handle.

The algorithm will produce a spanning surface which is in pseudo-normal position with respect to the handle decomposition of $M$.  That is, the spanning surface will also have a handle decomposition where its $0$- and $2$-handles are properly embedded in the $0$- and $2$-handles of $M$, respectively.  Additionally, its $1$-handles come in two flavors: those properly embedded in the $1$-handles of $M$ and those that are half-twisted bands at crossings of the $n$-tangle projection in an arbitrarily close neighborhood of the boundary of the $0$-handle of $M$.

Our paper is organized as follows.  In \S\ref{section: definitions} we review the needed definitions of handle decompositions for $2$-manifolds with boundary and closed oriented $3$-manifolds.

In \S\ref{subsection: algorithm} we describe the spanning surface algorithm.  Since there is a needed homological calculation for determining how many times a spanning surface intersects a given $2$-handle of $,M$, we provide the reader with an example of knots in the Poincar\'e homology sphere in \S\ref{section: P-homology sphere}.  

The example discussed in \S\ref{section: P-homology sphere} is based upon a geometric calculation---taking $+1$ Dehn surgery on the trefoil and producing a Heegaard diagram---lifted from D. Rolfsen's seminal book, {\em Knots and Links} \cite{Rolfsen}.  We offer a method for generalizing this calculation for arbitrary framed links in \S\ref{section: Constructing Heegaard graph}.

We end the paper with an open question in section \S\ref{section: questions}.

\section{Definitions}
\label{section: definitions}

First, we specify some of our notation.  The cardinality of a set or collection, $X$, will be denoted by $|X|$.  The interior of a topological set, $X$, will be denoted by, $int(X)$. 
For an oriented, $n$-dimensional manifold $M^n$, $-M^n$ will denote the same manifold with orientation reversed. An orientation preserving and orientation reversing map will be denoted by $\phi^+$ and $\phi^-$, respectively.
$B^k$ will be notation for a closed ball of dimension $k$ and $c(B^k)$ will denote the {\em center point} of the $k$-ball. The sign function, which we will denote with $\sigma(x)$ is defined as follows: 
$\sigma(x) =
\begin{cases}
    + & \text{ if } |x| = x \\
    - & \text{ otherwise}
\end{cases}$

To facilitate a straightforward expository, we begin with the needed definitions specialized to dimensions $0$, $1$, $2$ and $3$.

\subsection{Handle Structure and Orientation}
An {\em $n$-dimensional $k$-handle} is a $n$-ball having product structure, $B^k \times B^{n-k}$.  The {\em core} of a $k$-handle is $C^k := B^k \times c(B^{n -k})$, and the {\em co-core} is $C^{k \ast} := c(B^k) \times B^{n -k}$.

There are then two natural projection maps, $\pi: B^k \times B^{n-k} \rightarrow C^k$, and $\pi^\ast : B^k \times B^{n-k} \rightarrow C^{k \ast}$.

For $n=2,3$ we begin with an oriented 0-handle. We then choose an orientation for the core, $C^1$, of the 1-handle. This induces an orientation on the co-core, $C^{1*}$, of the 1-handle (such that the algebraic intersection between $C^1$ and $C^{1*}$ is positive). Now, with $C^1$ oriented, we note that the boundary of $C^1$ consists of two points which we can appropriately denote by $c^+$ and $c^-$. Additionally, we will denote by $\mathbb{B}^{\pm} := c^{\pm} \times B^{n-k}$ the attaching region of the 1-handle. For the 2-handle, choose the orientation on it so that it is coherent with the orientation of the 0-handle. Note that there is a choice to be made on the orientation of the 2-handle. In particular, after choosing an orientation on the boundary of the core, $\partial(C^{2}$), of a 2-handle, an orientation is induced on the co-core, $C^{2*}$ of the 2-handle. 

\subsection{Handle decomposition}
\label{subsection: Handle decomposition}

\begin{definition}[Compact surface with boundary]
For a compact surface with boundary, $\Sigma^2$, a {\em handle decomposition of $\Sigma^2$} consists of:
\begin{itemize}
    \item[0.] A finite collection of 0-handles $ \{ B^0 \times B^2\}_p$ such that $\mathbf{H}_0^2:= \bigsqcup_p (B^0 \times B^2) \subset \Sigma^2$, where $\partial \mathbf{H}_0^2 = \bigsqcup_p \partial(B^0 \times B^2) ( \cong \bigsqcup_p S^1)$. 
    \item[1.] A finite collection of 1-handles $ \{ B^1 \times B^1 \}_q$ such that $\mathbf{H}_1^2:= \bigsqcup_q (B^1 \times B^1) \subset \Sigma^2$, and non-intersecting attaching maps $\{ \phi^{\pm} \}_q$, where $\phi^{\pm}: \mathbb{B}^{\pm} \rightarrow \partial \mathbf{H}_0^2$. 
    \item[2.] A finite collection of 2-handles $ \{ B^2 \times B^0 \}_r $ such that $\mathbf{H}_2^2:= \bigsqcup_r (B^2 \times B^0) \subset \Sigma^2$, and non-intersecting attaching maps $ \{ \psi^{\pm} \}_r$, where $\psi^{\pm} :\partial B^2 \times B^0 \rightarrow \partial \left( \mathbf{H}_0^2 \bigsqcup \mathbf{H}_1^2/ \{ \phi^{\pm} \}_q \right)$ 
\end{itemize}
Then, $$\frac{\mathbf{H}^2_0 \sqcup \mathbf{H}^2_1 \sqcup \mathbf{H}^2_2}{\{ \phi^{\pm} \}_p \cup \{ \psi^{\pm} \}_q} \cong \Sigma^2.$$
\end{definition}

The reader should recall that from a handle decomposition we have the Euler Characteristic, $$\chi(\Sigma^2) = \sum_{0 \leq s \leq 2} (-1)^s |\mathbf{H}^2_s|.$$
As with the classical Seifert algorithm, we will be concerned only with oriented surfaces.
For our spanning surface construction we require the following specialized definition of a $3$-manifold handle decomposition.

\begin{definition}[Closed oriented $3$-manifolds]
\label{definition: 3-handle decomposition}
For a closed orientable $3$-manifold, $M^3$, a {\em handle decomposition of $M^3$} consists of:
\begin{itemize}
    \item[0.] A unique 0-handle $\{ B^0 \times B^3 \} = \mathbf{H}_0^3 \subset M^3.$ 
    \item[1.] A finite collection of 1-handles $\{ B^1 \times B^2 \}_g $ such that $\mathbf{H}_1^3 := \bigsqcup_g (B^1 \times B^2) \subset M^3$, and non-intersecting attaching maps $\{ \varphi^{\pm} \}_g$ where $\varphi^{\pm}: \mathbb{B}^{\pm} \rightarrow \partial \mathbf{H}_0^3$.
    \item[2.] A finite collection of 2-handles $\{ B^2 \times B^1 \}_g$ such that $\mathbf{H}_2^3 := \bigsqcup_g (B^2 \times B^1) \subset M^3$ and non-intersecting attaching maps $\{ \varrho^{\pm} \}_g $ where $\varrho^{\pm}: \partial B^2 \times B^1 \rightarrow \partial \left( \mathbf{H}_0^3 \bigsqcup \mathbf{H}_1^3 / \{ \varphi^{\pm} \}_g \right)$.
    \item[3.] A unique 3-handle $\{ B^3 \times B^0 \} = \mathbf{H}_3^3 \subset M^3$ with, up to isotopy, a unique attaching map $\varsigma: \partial \mathbf{H}_3^3 \rightarrow \left( \mathbf{H}_0^3 \bigsqcup \mathbf{H}_1^3 \bigsqcup \mathbf{H}_2^3 / \{ \varphi^{\pm} \}_g, \{ \varrho^{\pm} \}_g \right)$
\end{itemize}

Then
\begin{equation}\label{M3eq}
\frac{\mathbf{H}^3_0 \sqcup \mathbf{H}^3_1 \sqcup \mathbf{H}^3_2 \sqcup \mathbf{H}^3_3}{\{ \varphi^{\pm} \}_g, \{ \varrho^{\pm} \}_g, \{ \varsigma \} } \cong M^3
\end{equation}

\end{definition}

For later convenience, we assume the readily achievable technical condition ($\star$): for each pair of points, $(x, y)$, where $x$ is a point in the core of a handle in $\mathbf{H}^3_1$ and $y$ is a point in the co-core of a handle in $\mathbf{H}^3_2$, $ \{x \} \times \partial B^2_u$ and $\partial B^2_v \times \{y\}$ intersect transversely, $1 \leq u,v \leq g$.

\subsection{The Heegaard Graph}
\label{subsection: Heegaard graph}

From Definition \ref{definition: 3-handle decomposition} we can define, $\h \subset \partial \mathbf{H}^3_0$, a natural planar ``fat graph'' that captures all the information of a $3$-manifold's handle decomposition.

\begin{definition}[Heegaard Graph] 
    Given a Heegaard diagram $(F^2, \alpha, \beta)$ of a genus $g$ Heegaard splitting of a 3-manifold, the corresponding \textbf{Heegaard graph}, $\h$, consists of a collection of (fat) vertices $\{ V_i^{\pm} \}_g$ and edges $\{ e_{j,l} \}$, for $1 \leq i, j \leq g$, $1 \leq l < \infty$, called \textbf{$\h$-vertices} and \textbf{$\h$-edges}, respectively. The $\{ V_i^{\pm} \}_g$ represent the $g$ characteristic curves in $\alpha$, and the $\{ e_{j,l} \}$ are such that $ \{ \bigcup_l e_{j,l}: 1 \leq j \leq g \}$ represent the $g$ characteristic curves in $\beta$. For each $i$, $V_i^{\pm} \bigcap e_{j,l} \not= \emptyset$ for at least one $l$. In particular, each nonempty intersection $V_i^{\pm} \bigcap e_{j,l}$ is a numbered point on $\partial V_i^{\pm}$. For each $i$, $V_i^+ \xleftrightarrow{r} V_i^-$, where $r$ is a reflection. 
\end{definition}

$\h$ has an orientation inherited from its Heegaard splitting. Also, $\h \subset S^2 = \partial(B^0 \times B^3)$. We will denote by $\hand_0$, the unique 0-handle of the Heegaard splitting on which we will be working on. That is, $\h \subset \partial \hand_0$. From \eqref{M3eq} above, it can be seen that the fat vertices, $\{ V^\pm_i \}$, and edges, $\{ e_{j,l} \}$, of a Heegaard graph are precisely the $\varphi(\mathbb{B}^\pm_i)$ and $\varrho(\partial(B^2 \times c(B^1)))$, respectively.

\subsection{Submanifolds in normal position}
\label{subsection: normal position}

For the definitions in this section we assume that $\Sigma^2$ is a submanifold in $M^3$.  Morover, we assume that both $\Sigma^2$ and $M^3$ have handle decompositions.

\begin{definition}[Links in normal position]
\label{definition: normal position for links}
    Let $M^3$ be a closed oriented $3$-manifold with a handle decomposition and $L \subset M^3$ be a link. Then $L$ is in {\em weak normal position with respect to the handle decomposition} of $M^3$ if the following two conditions hold:
  \begin{itemize}
        \item[0.)] $L$ is contained in the union of $\hand_0$ and $\mathbf{H}_1^3$.
        \item[1.)] (intersection with $\mathbf{H}_1^3$) For each $1$-handle, $B^1 \times B^2$ of $\mathbf{H}_1^3$, with co-core $C^{1\ast}$, we require that $L \cap C^{1 \ast}$ be a discrete set of points so that we have $L \cap [B^1 \times B^2] = B^1 \times  [L \cap int(C^{1 \ast})]$.
        \end{itemize}
The link is in {\em normal position with respect to the handle decomposition} if it also satisfies the following:
        \begin{itemize}
        \item[2.)] (intersection with $\hand_0$) We require that $L \cap \hand_0 = L \cap \partial \hand_0$-- a collection of pairwise disjoint arcs and circles such that
        \begin{itemize}
            \item[i.)] Each arc has one endpoint contained in the interior of $\varphi(\mathbb{B}^\pm_i)$ and the other contained in the interior of $\varphi(\mathbb{B}^\pm_j)$, and its intersection with $\varphi(\mathbb{B}^\pm_k)$ is empty for $k \not= i,j.$
            \item[ii.)] Each circle component of $L \cap \partial \hand_0$ is away from the $\varphi(\mathbb{B}_i^\pm)$ (fat vertices) of $\h$.
        \end{itemize}
         \end{itemize}
We observe that every link is isotopic to one in weak normal position but not all links contained in $M^3$ are isotopic to a link in normal position since an isotopy that pushes a link so as to be contained in $\partial \hand_0$ may result in double points (crossings)---the collection of arcs and circles are no longer simple or pairwise disjoint.  With this in mind, we say a link is in {\em pseudo-normal position} if it satisfies condition--2.) except in a neighborhood of finitely many points in $\partial \hand_0$ where a crossing occurs.
\end{definition}

\begin{definition}[Surfaces in normal position]
\label{defintion: normal position for surfaces}
    We say the handle decomposition of $\Sigma^2$ is in {\em normal position} with respect to the handle decomposition of $M^3$ if, first, $\Sigma^2$ has boundary, $\partial \Sigma^2 = L$, such that $L$ is a link in normal position, and 
    \begin{itemize}
        \item[0.)] $\Sigma^2$ is contained in the union of $\hand_0$, $\mathbf{H}_1^3$, and $\mathbf{H}_2^3$.
        \item[1.)] (intersection with $\hand_0^3)$ $\Sigma^2 \cap \hand_0^3$ is a collection of pairwise disjoint properly embedded discs. 
        \item[2.)] (intersection with $\mathbf{H}_1^3$) For each $1$-handle, $B^1 \times B^2 \subset \mathbf {H}_1^3$ and its co-core, $C^{1\ast}$, we have that $\pi^\ast : \Sigma^2 \cap (B^1 \times B^2) \rightarrow C^{1\ast}$ is a collection of pairwise disjoint arcs which can be of three possible types: 
        \begin{itemize}
            \item[i.)] Type-1: Properly embedded in $C^{1\ast}$.
            \item[ii.)] Type-2: Embedded, having one endpoint on $\partial C^{1\ast}$ and one endpoint in $int(C^{1\ast})$. 
            \item[iii.)] Type-3: Embedded, having both endpoints in $int(C^{1\ast})$.  (These arcs are also known as {\em ribbon arcs}.)
        \end{itemize} 
        Thus, $\Sigma^2 \cap (B^1 \times B^2) = B^1 \times (\Sigma^2 \cap C^{1\ast})$, is a collection of rectangular discs of Type-1, Type-2, and Type-3, to expand the terminology in the obvious way.
        \item[3.)] (intersection with $\mathbf{H}_2^3$) For a $2$-handle, $B^2 \times B^1 \subset \mathbf{H}^3_2$, and its co-core, $C^{2\ast}$, we have that $\pi^\ast : \Sigma^2 \cap (B^2 \times B^1) \rightarrow C^{2\ast}$ is a finite collection of points in $int(C^{2\ast})$. Thus, we have $\pi^{*-1}(\pi^*(\Sigma^2 \cap B^2 \times B^1)) = \Sigma^2 \cap (B^2 \times B^1)$, a collection of $2$-discs all parallel to $C^{2}$.
    \end{itemize}
\end{definition}

By well known general position arguments, one can position any closed essential surface in an irreducible $3$-manifold so as to have the handle decomposition of the $3$-manifold imposed on the surface.  But, surfaces with boundary present as issue.  It is possible to position them so that their intersections with $\mathbf{H}^3_i, 1 \leq i \leq 2,$ mimic condition $2$ and $3$.  However, since $\partial \Sigma^2 \cap \hand_0$ can be an arbitrary tangle of $k$-stands with additional $S^1$ components, condition $1$ in general cannot be satisfied.  The issue of $\partial \Sigma^2 \cap \hand_0$ being a tangle is the motivation for the definitions in our next subsection.

\subsection{Link projection in Heegaard graph}
\label{subsection: Link projection}

For the unit interval, $I = [0,1]$, a proper embedding, $(I,\partial I) \rightarrow (B^3 ,\partial B^3)$, is a {\em strand}.  The image of the ``$0$'' endpoint is the {\em negative endpoint} and the image of the ``$1$'' endpoint is the {\em positive endpoint}. For an oriented, $S^1$, an embedding, $S^1 \rightarrow int(B^3)$, is a {\em circle}.

\begin{definition}[Tangles in $\hand_0$]
    An $(s,c)$-tangle (or just tangle), $T \subset B^3$, consists of a proper embedding of $s$ strands and an embedding of $c$ circles into a $3$-ball, all of which are pairwise disjoint.  Recalling the graph, $\h \subset \partial \hand_0$, a tangle, $\mathbf{T} \subset \hand_0$, is an \textbf{$\h$-tangle} if the endpoints of the strands are contained in the interior of the $\h$-vertices such that the reflection, $V_i^+ \xleftrightarrow{r} V_i^-$, sends negative (resp., positive) endpoints of strands to positive (resp., negative) endpoints of strands.  
    \end{definition}

For the following definition we consider the projections of an $\h$-tangle onto $\partial \hand_0$.

\begin{definition}
\label{definition: regular projection}
    Let $\mathbf{T}$ be an $\h$-tangle. Let $\mathbf{\pi}_0 : \mathbf{T} \rightarrow \partial \hand_0$ be a projection of the $\h$-tangle onto the $2$-sphere boundary, $\partial \hand_0$.  Then $\mathbf{\pi}_0$ is \textbf{$\h$-regular} if the following hold: 
    \begin{itemize}
        \item (Crossing condition) All crossings (which are double points) of $\pi_0(\mathbf{T})$ are in the regions of $\partial\hand_0 - \h$.
        \item (Vertex condition) For each $\h$-vertex, $V^+_i$, we have $\pi_0(\mathbf{T}) \cap V^+_i$ is a collection of $k$ arcs where $k$ is the number of strand endpoints in int($V^+_i$).  Further, we require that there be similar arcs in $V^-_i$ such that the two collections of strand endpoints respect the reflection map,  $V_i^+ \xleftrightarrow{r} V_i^-$.
        \item (Edge condition) Each edge, $e_{j,l} \subset \h$, intersects $\pi_0(\mathbf{T})$ transversely resulting in a finite collection of points. 
    \end{itemize}
\end{definition}
    We observe that the orientation of $\mathbf{T}$ induces an orientation of the projection, $\pi_0(\mathbf{T})$.

For every link, $L \subset M$, in weak normal position we have the associated $\h$-tangle, $L \cap \hand_0$.  We will take the {\em $\h$-regular projection, $\pi_0(L)$,} to be $\pi_0 (L \cap \hand_0)$.
   
Finally, we observe that we can obtain an oriented link in normal position from $\pi_0(L)$ by performing the classical operation of resolving the crossings of $\pi_0(L)$ in an oriented fashion.  We will denote such a link in normal position obtained from $\pi_0(L)$ by $L_N$.

\subsection{Seifert's algorithm on tangles}
\label{subsection: Link projection}

In this section we describe Seifert's algorithm specialized to $\h$-tangles.  We assume that we are given a $3$-manifold, $M^3$, along with a handle decomposition that results in a Heegaard graph, $\h$, on the boundary of the unique $0$-handle, $\hand_0$.  We begin with the following definition.
\begin{definition}
    An $\h$-vertex is \textbf{balanced} if it contains the same number of positive and negative endpoints of an $\h$-tangle $\mathbf{T}$. An $\h$-tangle, $\mathbf{T}$, is \textbf{balanced} if  each $\h$-vertex in $\h$ is balanced.   We say a link, $L \subset M$, in weak normal position is {\em balanced} if its associated $\h$-tangle, $L\cap \hand_0$, is balanced.
\end{definition}

\begin{proposition}
\label{proposition: balanced}
    Let $L \subset M^3$ be an oriented link such that $L$ is in normal position.  Then $L$ bounds an oriented surface (in normal position), $\Sigma^2 \subset \mathbf{H}_1^3 \cup \hand_0$, if and only if $L$ is balanced.
\end{proposition}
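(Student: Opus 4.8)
The plan is to analyze $\Sigma^2$ one $1$-handle at a time, reducing the global statement to a signed counting condition on the co-cores that is exactly the balanced condition. The local object to understand is the product decomposition $\Sigma^2\cap(B^1\times B^2)=B^1\times(\Sigma^2\cap C^{1\ast})$ coming from Definition~\ref{defintion: normal position for surfaces}, together with the oriented points $L\cap C^{1\ast}$ and their intersection signs $\sigma$ against the oriented co-core $C^{1\ast}$.

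For necessity I would first argue that, because $\Sigma^2\subset\mathbf{H}_1^3\cup\hand_0$ meets no $2$-handle and $\partial\Sigma^2=L$ lies in $int(C^{1\ast})$ inside each $1$-handle, the arcs $\Sigma^2\cap C^{1\ast}$ can only be of Type-$3$ (ribbon) or be closed: a Type-$1$ or Type-$2$ arc would place a long edge $B^1\times\{p\}$ of its band on the lateral boundary $B^1\times\partial B^2$, producing boundary of $\Sigma^2$ not belonging to $L$. Each ribbon band $B^1\times\gamma$ is then an oriented rectangle whose two long edges are the through-arcs of $L$ lying over $\partial\gamma$; traversing the oriented boundary of the rectangle shows these two through-arcs run in opposite directions along the $C^1$-factor, so the two points of $L\cap C^{1\ast}$ joined by $\gamma$ carry opposite signs $\sigma$. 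Hence the points of $L\cap C^{1\ast}$ pair off into opposite-sign pairs, forcing equal numbers of positive and negative points in each co-core; translating these signs back to strand endpoints in the $\h$-vertices through the reflection $V_i^+\xleftrightarrow{r}V_i^-$ shows every $\h$-vertex is balanced.

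For sufficiency I would reverse the construction. Assuming $L$ balanced, in each co-core I choose a system of pairwise disjoint ribbon arcs $\{\gamma\}$ realizing an opposite-sign pairing of the points $L\cap C^{1\ast}$ (such a pairing by disjoint arcs exists precisely because the two signs occur in equal numbers), and thicken each to a band $B^1\times\gamma$. On $\partial\hand_0$ the short edges $\{c^\pm\}\times\gamma$ of these bands, together with the tangle strands of $L\cap\partial\hand_0$, form a $1$-complex in which every strand endpoint meets exactly one tangle strand and exactly one band edge; being $2$-regular, the complex is a disjoint union of simple closed curves. Each such curve bounds a disc in the ball $\hand_0$, and disjoint curves can be capped by pairwise disjoint properly embedded discs; these discs and the bands serve as the $0$- and $1$-handles of $\Sigma^2$, which by construction lies in $\hand_0\cup\mathbf{H}_1^3$ and is in normal position, vacuously satisfying the $2$-handle condition.

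The crux in both directions is the orientation bookkeeping, and this is where the balanced hypothesis does the work; I expect the main obstacle to be showing the constructed $\Sigma^2$ is orientable with $\partial\Sigma^2=L$ as oriented manifolds. The key observation is that pairing a positive point with a negative one makes each boundary cycle alternate between a tangle strand, traversed from its negative to its positive endpoint, and a band edge, traversed from a positive to a negative endpoint; orienting every disc so as to induce the $L$-orientation on its strand arcs is therefore consistent around each cycle, and the untwisted product bands propagate a coherent orientation across all of $\Sigma^2$. Conversely, an unbalanced $\h$-vertex would force a band joining two equal-sign points---equivalently a half-twisted band---obstructing orientability, which is exactly what a surface confined to $\hand_0\cup\mathbf{H}_1^3$ cannot accommodate.
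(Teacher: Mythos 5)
Your argument follows the same route as the paper's proof: for necessity, the observation that a surface confined to $\hand_0\cup\mathbf{H}_1^3$ meets each co-core only in ribbon (Type-3) arcs, whose endpoints pair off with opposite signs; for sufficiency, choosing pairing arcs in the co-cores (equivalently, pairing paths in the $\h$-vertices), closing up $L$ together with these arcs into an unlink on $\partial\hand_0$, capping with disjoint discs in the ball, and attaching product bands in the $1$-handles. The proposal is correct, and in fact spells out the orientation bookkeeping and the exclusion of Type-1/Type-2 arcs in more detail than the paper does.
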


\begin{proof}
If we assume that $L$ bounds an oriented surface, $\Sigma^2$, that is totally contained in $\hand_0$ and the $1$-handles of the decomposition of $M^3$, then it is readily seen that $\Sigma^2$ will intersect the co-cores of $\mathbf{H}^3_1$ in a collection of ribbon arcs (Type-3 arcs).  (There could possibly be circle intersections, but our argument does not need to deal with them.)  Under any orientation of the co-cores, these arcs give a ``pairing'' of positive and negative puncture points.  This pairing can be transcribed to the vertices of the $\h$-graph, showing that the associated $\h$-tangle is balanced.

Proving the other direction, we assume that $L$ is in normal position and that the associated $\h$-tangle is balanced.
For each vertex, $V^+_i , \ 1 \leq i \leq g$, of $\h$ we can then choose ``pairing paths'' in $int(V^+_i) \setminus int(L \cap V^+_i)$ that will connect each positive endpoint of $L \cap V^+_i$ to a negative endpoint of $L \cap V^+_i$.  It can be readily seen that such a choice is always possible but not always unique.  The result is, $\{e(i)^+_1, \cdots , e(i)^+_{k_i}\} \subset int(V^+_i) \setminus int(L \cap V^+_i)$, a collection of embedded pairwise disjoint arcs, each one containing a positive/negative pair of endpoints of $L \cap \partial \hand_0 (= L \cap \hand_0)$.

We can then obtain a corresponding collection of arcs, $\{e(i)^-_1, \cdots , e(i)^-_{k_i}\} \subset V^-_i$, using the reflection map, $V_i^+ \xleftrightarrow{r} V_i^-$.

With these pairing paths in the $\h$-vertices in place, we now observe that the union
$$\mathbf{P}= L \sqcup [\cup_{i,j} (e(i)^+_j \cup e(i)^-_j)] (\subset \partial \hand_0)$$
corresponds to an oriented unlink projection onto a $2$-sphere.  We then choose a pairwise disjoint collection of spanning discs, $\mathbf{D} \subset \hand_0$, having $\mathbf{P}$ as its boundary.

Finally, for each set of pairing segments, $ e(i)^+_j \subset V_i^+ , \  e(i)^-_j \subset V_i^-$, that correspond to each other by a reflection map, we attach a rectangular $2$-disc band in the associated $1$-handle.  Two non-adjacent sides of the band will necessarily be on the link, $L$, as it passes through the one handle, and the other two non-adjacent sides of the band will be glued to the pairing segments. We now have a surface satisfying Definition 5, vacuously satisfying condition 3.), and satisfying Type-3 of condition 2.), intersecting $\h_0$ in discs. 

We observe that these attaching bands are in normal position with respect to the $1$-handles and the resulting surface, $\Sigma^2$, is oriented.
\end{proof}

An {\em extension disc}, $E$, is obtained by taking a disc parallel to the core, $C^2$ of a $2$-handle in $\mathbf{H}^3_2$ and adding an annular collar to it, ``extending'' it into the $1$-handles and $\hand_0$.  We say a link of $k$-components, $L_E \subset M$, is an {\em extension link} if it is in normal position and $L_E$ bounds a collection of pairwise disjoint extension discs, $\{E_1 , \cdots , E_k \}$.  The salient observation to make here is that an extension link bounds an oriented (non-connected when $k>1$) surface---a collection of extension discs---that are in normal position.

\begin{proposition}
\label{proposition: extension balanced}
    Let $L \subset M$ be an oriented link in weak normal position and assume $M$ is an integral homology sphere with a given handle decomposition.  Then there exists an extension link of $k$-components, $L_E$, such that, $\cup_{1 \leq i \leq k} E_i \subset M \setminus L$ and $ L_E \sqcup L \subset M $ and is balanced.  
\end{proposition}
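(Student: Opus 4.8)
The plan is to reduce the proposition to a single linear-algebra statement over $\mathbb{Z}$ that is governed entirely by the homology-sphere hypothesis. First I would extract from the handle decomposition \eqref{M3eq} the associated cellular chain complex $\mathbb{Z} \xrightarrow{\partial_3} \mathbb{Z}^g \xrightarrow{\partial_2} \mathbb{Z}^g \xrightarrow{\partial_1} \mathbb{Z}$, where $C_0 = \mathbb{Z}$ and $C_3 = \mathbb{Z}$ come from the unique $0$- and $3$-handles and $C_1 \cong C_2 \cong \mathbb{Z}^g$ are generated by the $1$- and $2$-handles. Because there is a single $0$-handle, each $1$-handle has both feet on it and $\partial_1 = 0$; orientability gives $H_3(M)\cong\mathbb{Z}=\ker\partial_3$, so $\partial_3 = 0$ as well. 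The middle map $\partial_2$ is represented by the $g\times g$ integer matrix $A$ with $A_{ij} = \beta_j \cdot \alpha_i$, the algebraic intersection number of the attaching curve of the $j$-th $2$-handle with the co-core $C^{1\ast}_i$ of the $i$-th $1$-handle (equivalently, $\varrho(\partial B^2\times c(B^1))$ against $\varphi(\mathbb{B}^\pm_i)$ in $\h$). Since $M$ is an integral homology sphere, $H_2(M) = \ker\partial_2 = 0$ and $H_1(M) = \operatorname{coker}\partial_2 = 0$, so $A$ is an isomorphism of $\mathbb{Z}^g$; hence $\det A = \pm 1$ and $A^{-1}$ is integral. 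Establishing this unimodularity is the crux of the argument, and it is precisely the place where the homology-sphere hypothesis is indispensable.

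Second, I would translate ``balanced'' into this language. To each $1$-handle I assign the integer $d_i := L \cdot C^{1\ast}_i$, the net algebraic count of strands of the associated $\h$-tangle running through the $i$-th handle. A short sign check shows that, up to a uniform sign convention, $d_i$ coincides with the imbalance (positive minus negative endpoints) recorded at the $\h$-vertex $V_i^+$, so $L$ is balanced exactly when the vector $\vec d = (d_1,\dots,d_g)$ vanishes. The key geometric observation is that an extension disc $E_j$ built from the $j$-th $2$-handle has boundary a push-off of the attaching curve $\beta_j$, and therefore contributes exactly $A_{ij}$ to the intersection number with $C^{1\ast}_i$. Consequently, if $L_E$ is taken to consist of $x_j$ oriented parallel copies of $\partial E_j$ for $j = 1,\dots,g$, then the imbalance of $L \sqcup L_E$ at the $i$-th handle is $d_i + \sum_j A_{ij}x_j = (\vec d + A\vec x)_i$.

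Third, I would solve for the multiplicities. Setting $A\vec x = -\vec d$ and using the integrality of $A^{-1}$ established above yields a solution $\vec x = -A^{-1}\vec d \in \mathbb{Z}^g$; I then take $|x_j|$ parallel copies of $E_j$ oriented by $\sigma(x_j)$. By construction $L\sqcup L_E$ has vanishing imbalance at every $\h$-vertex and is therefore balanced, while $L_E$ bounds the disjoint extension discs $E_1,\dots,E_k$ with $k = \sum_j |x_j|$.

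Finally, I would attend to the embeddedness and disjointness requirements, which I expect to be routine general-position bookkeeping rather than a genuine obstacle. Since $L$ is only in weak normal position it lies in $\hand_0 \cup \mathbf{H}_1^3$ and so is disjoint from the disc portions of the $E_j$, which sit in $\mathbf{H}_2^3$; the annular collars and the boundary $L_E$ can be isotoped into a thin collar of $\partial(\hand_0\cup\mathbf{H}_1^3)$, and inside each $1$-handle the finitely many co-core crossings of $L_E$ can be made disjoint from those of $L$, giving $\cup_i E_i \subset M\setminus L$. Distinct curves $\beta_j$ are disjoint in the Heegaard diagram and parallel push-offs of a single $\beta_j$ are mutually disjoint, so the extension discs are pairwise disjoint; pushing $L_E$ onto $\partial\hand_0$ places it in normal position. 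This produces the required balanced extension link and completes the argument.
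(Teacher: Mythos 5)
Your argument is correct and is essentially the paper's own proof in different clothing: the paper writes down the presentation $H_1(M;\mathbb{Z}) = \{A_1,\dots,A_g \mid R_1,\dots,R_g\}$ from the handle decomposition, uses $H_1(M;\mathbb{Z})=0$ to solve $\sum_i \ell_i A_i = \sum_j x_j R_j$ integrally, and takes $|x_j|$ signed parallel copies of $\partial E_j$, which is exactly your system $A\vec{x}=-\vec{d}$ with $\ell_i = L \overset{a}{\cap} C^{1\ast}_i$ and $r_{ij}=A_{ij}$. The only cosmetic differences are that you note unimodularity of $\partial_2$ (surjectivity alone suffices) and conclude balancedness directly from the vanishing of the intersection numbers with the co-cores, whereas the paper routes that last step through Proposition~\ref{proposition: balanced}.
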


\begin{proof}
To begin, for the moment we ignore our assumption that $M$ is a $\mathbb{Z}$-homology sphere and we consider how to write down a finite representation of its first homology, $H_1(M; \mathbb{Z})$, utilizing its handle decomposition structure of genus $g$.  As previously discussed, we assume an orientation on the cores of the $1$-handles, $C^1_i , \ \text{for} \ 1 \leq i \leq g$.  Then we can readily obtain a set of generators by extending each $C^1_i$ in $\hand_0$ to an oriented loop, $\hat{C}^1_i$.  We remark that the oriented link, $\sqcup_{1 \leq i \leq g} \hat{C}^1_i \subset M$, results in a $g$-strand tangle in $\hand_0$ and it is convenient to choose this link so as to have an $\h$-regular projection that is without any crossings.

Building on previously established geometry, the orientations of $C^1_i$ and $C^{1\ast}_i$ have been chosen so that the resulting algebraic intersection, $\hat{C}^1_i \overset{a}{\cap} C^{1\ast}_i = 1$.  Thus, reversing orientation we have, $-\hat{C}^1_i \overset{a}{\cap} C^{1\ast}_i = -1$.   More generally, $\hat{C}^1_i \overset{a}{\cap} C^{1\ast}_j = \delta_{ij}$.

We denote the homology class, $A_i = [\hat{C}^1_i], \ 1\leq i \leq g$.  Then $\{A_i| \ 1 \leq i \leq g\}$ is a set of generators for $H_1(M; \mathbb{Z})$.
Then, for an oriented knot, $K \subset M$, the homology class, $[K] \in H_1 (M ; \mathbb{Z})$, is just $$[K] = r_1 A_1 + \cdots + r_g A_g,$$
where $r_i = K \overset{a}{\cap} C^{1\ast}_i, \ \text{for} \ 1 \leq i \leq g$.  In particular, for each $2$-handle, let $E_j, \ 1 \leq j \leq g$, be an extension disc.  Then we obtain $g$ relators for our representation of $H_1(M; \mathbb{Z})$, $$R_j = [\partial E_j] = r_{1j} A_1 + \cdots + r_{gj} A_g.$$
Thus, $$H_1(M; \mathbb{Z}) = \{ A_1, \cdots A_g | R_1, \cdots R_g \}.$$

Now, let $L \subset M$ be an oriented link in normal position.  Let $\ell_i$ be $g$ coefficients such that $[{L}] = \sum_{1 \leq i \leq g} \ell_i A_i$.  If we now bring back our assumption that $H_1(M; \mathbb{Z}) =0$, then we know that there is an integral solution, $(x_1, \cdots, x_g) \in \mathbb{Z}^g$, to the equation $$\sum_{1 \leq i \leq g} \ell_i A_i = \sum_{1 \leq j \leq g} x_j R_j.$$

Our extension link is then obtained by letting 

    $$ L_E = \bigcup_g \left(\bigsqcup_{| x_i|} -\sigma(x_i) \partial E_i \right),$$

where the subscript, $|x_j|$, on each $\text{``}\sqcup\text{''}$ is interpreted as ``$|x_j|$ parallel copies''.  By construction, 
\begin{equation}
\label{equation: extension}
[L \sqcup L_E ] = \sum_{1 \leq i \leq g} \ell_i A_i - \sum_{1 \leq j \leq g} x_j R_j = 0.
\end{equation}
$L_E$ will have $k$ components, where $k = \sum |x_j|$. Necessarily, the link $L \sqcup L_E$ bounds an oriented surface contained in $\hand_0$ and the 1-handles, and is therefore balanced by Proposition \ref{proposition: balanced}.
\end{proof}

\subsubsection{Generalized Seifert's algorithm}
\label{subsection: algorithm} Let 
Let $L \subset M$ be an oriented link in a homology sphere with a given handle decomposition.  We assume $L$ is in weak normal position.  Applying Proposition \ref{proposition: extension balanced}, let $L_E$ be an extension link such that the oriented link, $L \sqcup L_E \subset M$.  Assume $L \sqcup L_E$ is in weak normal position.  By construction, it is balanced with respect to the handle decomposition.  We then take an $\h$-regular projection, $\pi_0(L \sqcup L_E)$, and consider the associated oriented, pseudo-normal positioned link.  We observe that at each crossing of $\pi_0(L)$ with $\pi_0(L_E)$, the component of $L_E$ is the under-strand---closer to $\partial \hand_0$.

Now let, $(L \sqcup L_E)_N$, be the oriented link in normal position obtained from $\pi_0(L \sqcup L_E)$ by resolving, in an oriented fashion, the crossing(s) of the projection.  We observe that the components of $L_E$ may no longer be contained as components of $(L \sqcup L_E)_N$.

By construction, $(L \sqcup L_E)_N,$ is balanced.  Applying Proposition~\ref{proposition: balanced}, we construct an oriented surface in normal position, $\Sigma^2$.  Specifically, $\Sigma^2$ will be composed of $0$-handles in $\hand_0$ and $1$-handles in $\mathbf{H}^3_1$.

We reconstruct our original link $L \sqcup L_E$ by adding in a half-twisted band at each previously resolved crossing.  These bands will be attached to the boundary of the $0$-handles of $\Sigma^2$.  Their attachment takes the surface, $\Sigma^2$, which is in normal position, to a surface that is in {\em pseudo-normal position}, $\Sigma^2_p$.   That is, similar to Seifert's original construction, these half-twisted bands are positioned in the $0$-handle of $M$, not in its $1$-handles.

Finally, the boundary of $\Sigma^2_p$ will contain the link $L_E$.  We then extend $\Sigma^2_p$ by capping off each component of $L_E$ with the associated extension disc.  Denoting the resulting surface by $\Sigma^2_E$, we say it is in {\em pseudo-normal position} with respect to the handle decomposition---$1$-handles of $\Sigma^2_E$ comes in two flavors, half-twisted bands in $\hand_0$ and ones in $\mathbf{H}^3_1$.

The above construction yields the following corollary.

\begin{corollary}[Generalized Seifert's algorithm]
    Let $L \subset M$ be an oriented link in weak normal position with $M$ being an integral homology sphere.  Then $L$ is isotopic to a link in pseudo-normal position, $L^\prime$, which bounds an oriented surface in pseudo-normal position, $\Sigma^2 \subset M$.  Moreover, $\Sigma^2$ is algorithmically constructed.
\end{corollary}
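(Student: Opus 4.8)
The plan is to assemble the construction laid out immediately above the statement into a formal argument, with the two propositions serving as the two engines. First I would take the given oriented link $L \subset M$ in weak normal position and apply Proposition~\ref{proposition: extension balanced}. This is precisely the step that consumes the hypothesis that $M$ is an integral homology sphere: the vanishing of $H_1(M;\mathbb{Z})$ is what guarantees the integral solution $(x_1,\dots,x_g)$ whose existence produces an extension link $L_E$, built from signed parallel copies of the $\partial E_i$, for which $L \sqcup L_E$ is balanced and $\bigcup_i E_i \subset M \setminus L$. I record that $L \sqcup L_E$ is in weak normal position and, by construction, balanced with respect to the handle decomposition.

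Next I would pass to an $\h$-regular projection $\pi_0(L \sqcup L_E)$ onto $\partial \hand_0$; this projection realizes an isotopy of $L \sqcup L_E$ into pseudo-normal position, and in particular isotopes $L$ to a pseudo-normal representative $L'$. Resolving the crossings of $\pi_0(L \sqcup L_E)$ in the oriented fashion yields a link $(L \sqcup L_E)_N$ in genuine normal position. Since all crossings occur in $\partial \hand_0 - \h$, away from the $\h$-vertices, the oriented resolution does not alter the strand endpoints lying in any $\h$-vertex, so balancedness is preserved and $(L \sqcup L_E)_N$ is still balanced. Applying Proposition~\ref{proposition: balanced} to this balanced link in normal position then produces an oriented surface $\Sigma^2$ in normal position whose $0$-handles sit in $\hand_0$ and whose $1$-handles sit in $\mathbf{H}_1^3$.

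Then I would run the resolution in reverse. At each crossing of $\pi_0(L \sqcup L_E)$ that was resolved I reattach a half-twisted band to the $0$-handle portion of $\Sigma^2$, recovering a surface $\Sigma^2_p$ whose boundary is the pseudo-normal link $L' \sqcup L_E$; these bands lie in a collar of $\partial \hand_0$, inside the $0$-handle of $M$ rather than its $1$-handles, so $\Sigma^2_p$ is in pseudo-normal position. Finally, since by construction each component of $L_E$ is an oriented copy of some $\partial E_i$, I cap off $L_E$ with the corresponding extension discs to obtain $\Sigma^2_E$; its boundary is then exactly $L'$, the desired spanning surface. As every step is explicit, $\Sigma^2 := \Sigma^2_E$ is algorithmically constructed.

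I expect the main obstacle to be verifying that the final capping is orientation-coherent and embedded. One must check that the orientations of the half-twisted bands, of the $0$- and $1$-handles of $\Sigma^2$, and of the extension discs all agree so that $\Sigma^2_E$ is a genuinely \emph{oriented} surface, and that the extension discs meet the rest of $\Sigma^2_p$ only along $L_E$---this is exactly why Proposition~\ref{proposition: extension balanced} was stated with $\bigcup_i E_i \subset M \setminus L$. The remaining bookkeeping, namely confirming the intersection conditions of Definition~\ref{defintion: normal position for surfaces} for $\Sigma^2_E$ away from the crossing collar, is routine general position.
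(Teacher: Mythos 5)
Your proposal follows essentially the same route as the paper's own construction: apply Proposition~\ref{proposition: extension balanced} to balance $L$ with an extension link, take an $\h$-regular projection, resolve crossings to get a balanced link in normal position, apply Proposition~\ref{proposition: balanced} to build the normal-position surface, reattach half-twisted bands at the resolved crossings, and cap off $L_E$ with extension discs. The only detail the paper records that you omit is the convention that at each crossing of $\pi_0(L)$ with $\pi_0(L_E)$ the $L_E$ component is taken as the under-strand (closer to $\partial \hand_0$), but your closing remarks on orientation coherence and disjointness of the extension discs are consistent with the paper's intent.
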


\begin{remark}
It is readily observed that our generalized Seifert's algorithm can be applied to any oriented link in an arbitrary closed, oriented $3$-manifold when the link is homologically trivial.  All that is necessary is that we have an integral solution to Equation~(\ref{equation: extension}). 
\end{remark}

\section{Example: The Poincar\'e homology sphere.}
\label{section: P-homology sphere}

In light of the terminology and framework described, we use as an example, the Poincar\'e homology sphere, $P^3$,  in order to demonstrate the utility of working on the planar diagram, $\h$. We refer the reader to \cite{Rolfsen} for a more detailed construction of $P^3$. Figure \ref{PoincareHom1} shows $\h$ for a genus-2 Heegaard diagram, $(F^2, \alpha, \beta)$, of $P^3$. The vertices, $V_1^\pm$ and $V_2^\pm$, colored blue and red respectively, represent the two characteristic curves in $\alpha$, and edges $e_{j,1}$ and $e_{j,2}$ colored orange and black respectively, represent the images of the two characteristic curves in $\beta$.

\begin{figure}[H]
    \centering
    \begin{minipage}{0.45\textwidth}
        \centering
        \includegraphics[width=1.1\textwidth]{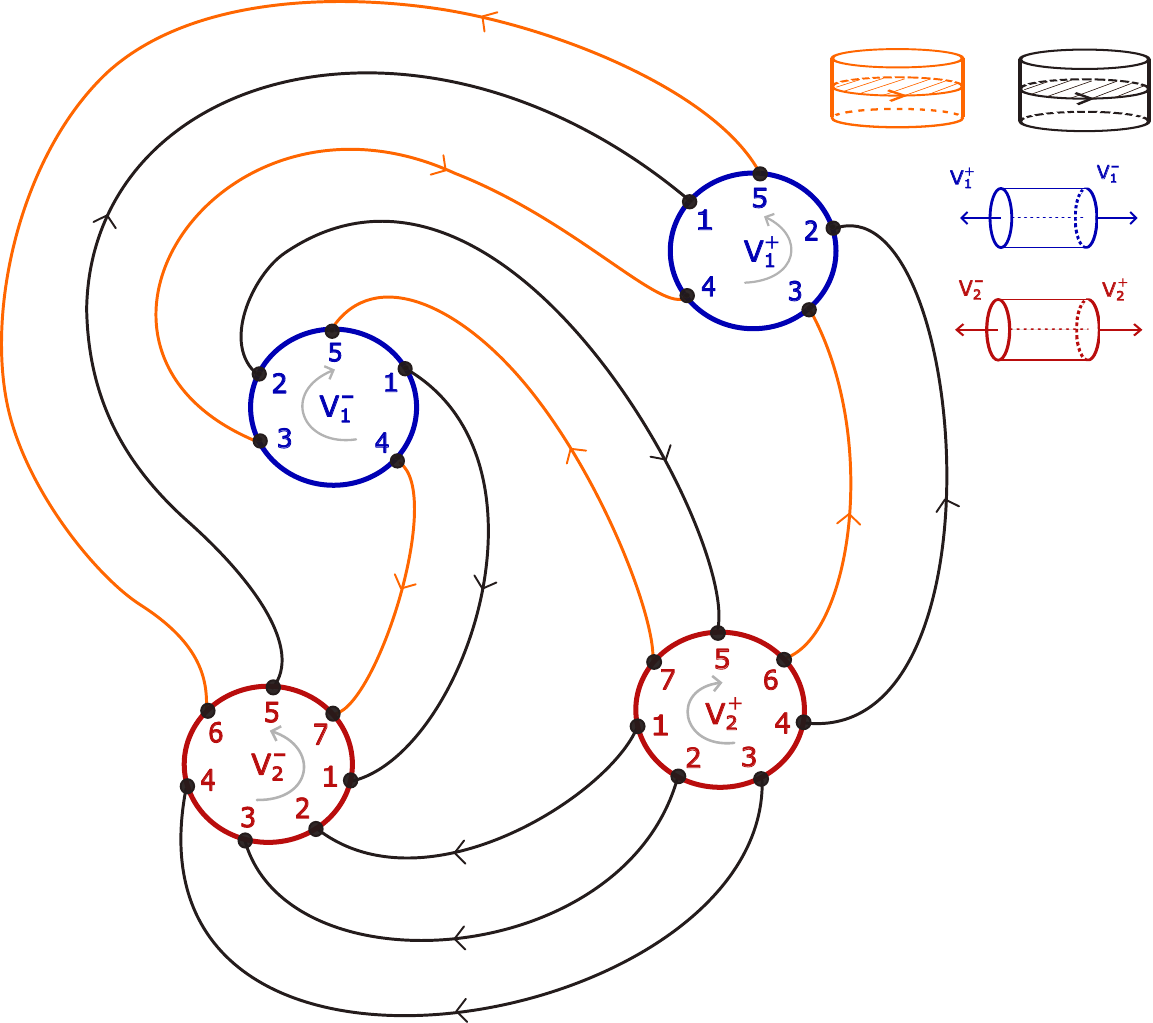} 
        \caption{$\h$-graph of $P^3$}
        \label{PoincareHom1}
    \end{minipage}\hfill
    \begin{minipage}{0.45\textwidth}
        \centering
        \includegraphics[width=1.1\textwidth]{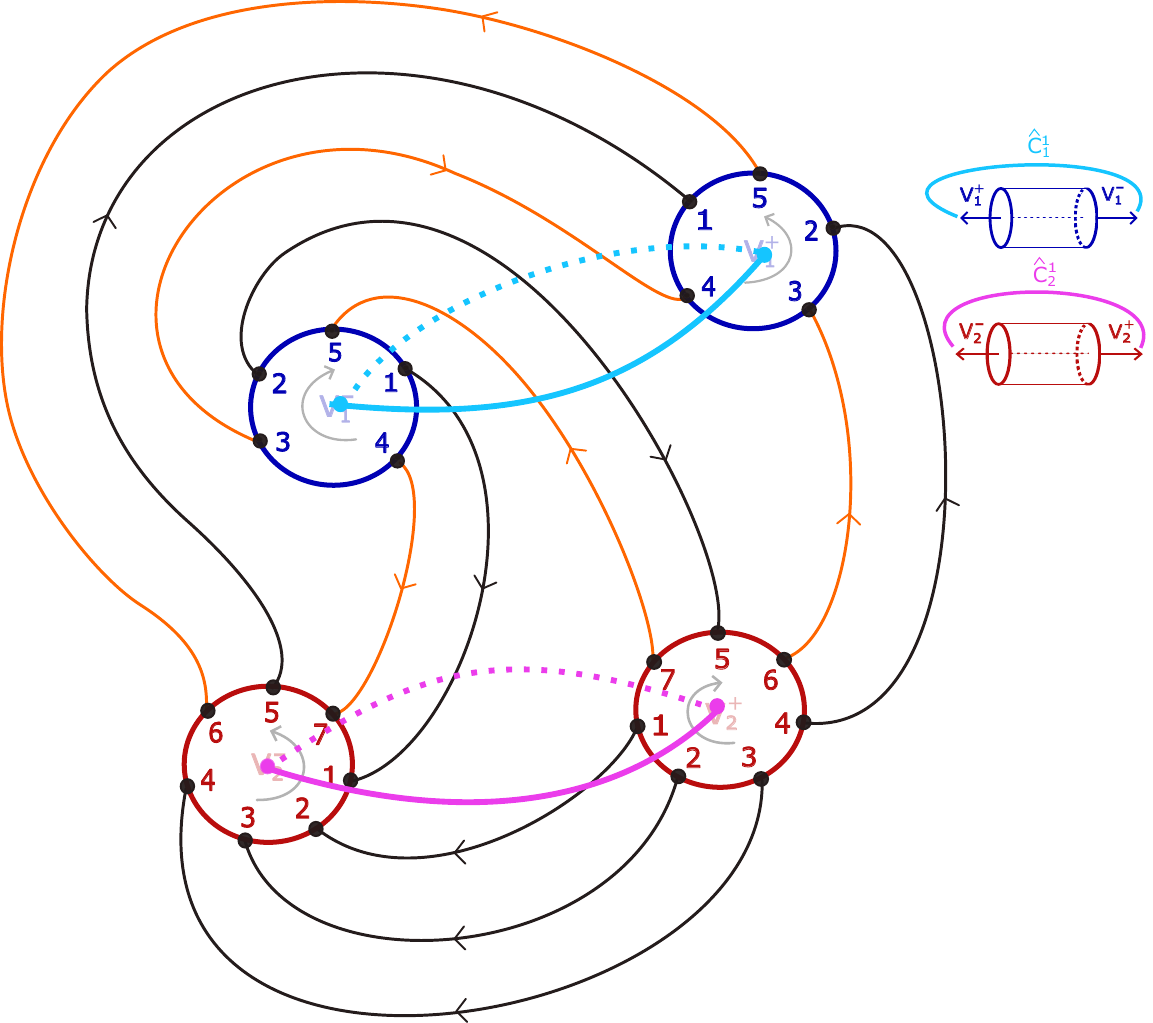} 
        \caption{Generators}
        \label{Generators}
    \end{minipage}
\end{figure}

In order to obtain a finite presentation of $H_1(P^3; \mathbb{Z})$ as described in the proof of Proposition \ref{proposition: extension balanced}, we start by extending the cores of the two 1-handles into $\hand_0$ to obtain the generators $A_1 = [\hat{C}^1_1]$ and $A_2 = [\hat{C}^1_2]$ (See Figure \ref{Generators}). Now, we have two relators $R_1 = [\partial E_{1}]$ and $R_2 = [\partial E_{2}]$ associated to the 2-handles colored orange and black, respectively, where $[\partial E_{i}] = r_{1i}A_1 + r_{2i}A_2$ for $i=1,2$. Keeping in mind that $\hat{C}_1^1 \overset{a}{\cap} C^1_1 = 1$ and $\hat{C}_2^1 \overset{a}{\cap} C^1_2 = 1$, we have 
\begin{align*}
    R_1 & = -A_1 + 2A_2 \\
    R_2 & = -2A_1 + 3A_2
\end{align*}

Now, since $P^3$ has trivial first homology, then $A_i = [\hat{C}_{i}] = 0$ for $i=1,2$ and so each $A_i$ bounds a surface. In terms of the relators $R_1$ and $R_2$, we have the following 
\begin{align*}
    A_1 & = x_1R_1 + y_1R_2 = x_1(-A_1 + 2A_2) + y_1(-2A_1 + 3A_2) = (-x_1 - 2y_1)A_1 + (2x_1 + 3y_1)A_2 \\
    A_2 & = x_2R_1 + y_2R_2 = x_2(-A_1 + 2A_2) + y_2(-2A_1 + 3A_2) = (-x_2 - 2y_2)A_1 + (2x_2 + 3y_2)A_2
\end{align*}

which results in the following two systems
  \begin{align*}
    -x_1 - 2y_1 & = 1 & -x_2 - 2y_2 & = 0 \\
    2x_1 + 3y_1 & = 0 & 2x_2 + 3y_2 & = 1
  \end{align*}

Once solved, we get that 
\begin{align*}
    x_1 & = 3 & x_2 & = 2  \\
    y_1 &= -2 & y_2 & = -1
\end{align*}
and so
\begin{align*}
    A_1 &= 3R_1 - 2R_2 \\
    A_2 &= 2R_1 - R_2 
\end{align*}

\begin{figure}[h!]
    \centering
    \includegraphics[width=55mm, height=50mm]{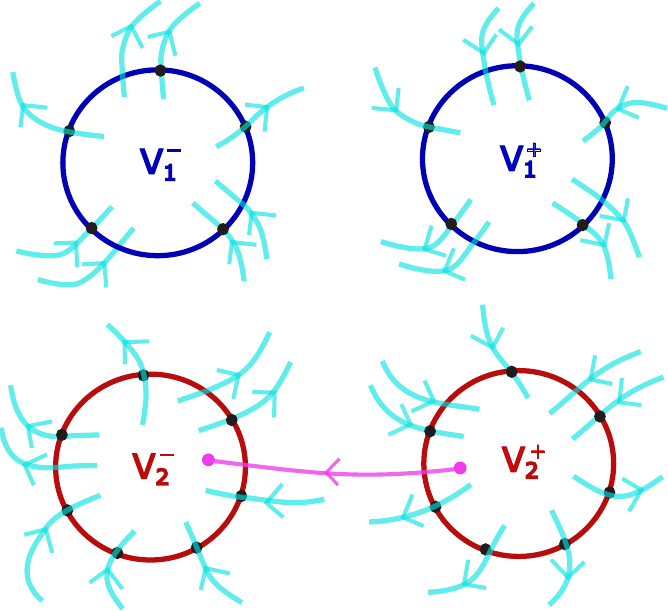}
    \caption{$A_2 \sqcup L_{E_2}$}
    \label{UnbalancedTangleA2}
\end{figure}

Now, $A_2$ is not balanced. The extension link needed to balance $A_2$ is $L_{E_2} = (\bigsqcup_2 -\partial E_1) \bigcup (\partial E_2)$. By construction, $A_2 \sqcup L_{E_2}$ will be balanced (see Figure \ref{UnbalancedTangleA2}) and, by Proposition 9, will bound an oriented surface contained in the 0- and 1- handles of $P^3$. 

Figure \ref{Poincare1_push} shows $L_{E_2}$ projected onto $\h \setminus V_i^\pm$. In order to identify the surface that $A_2 \sqcup L_{E_2}$ bounds, we add in pairing paths as shown in Figure \ref{Poincare1_bands}. 

\begin{figure}[ht]
    \centering
    \begin{minipage}{0.45\textwidth}
        \centering
        \includegraphics[width=0.8\textwidth]{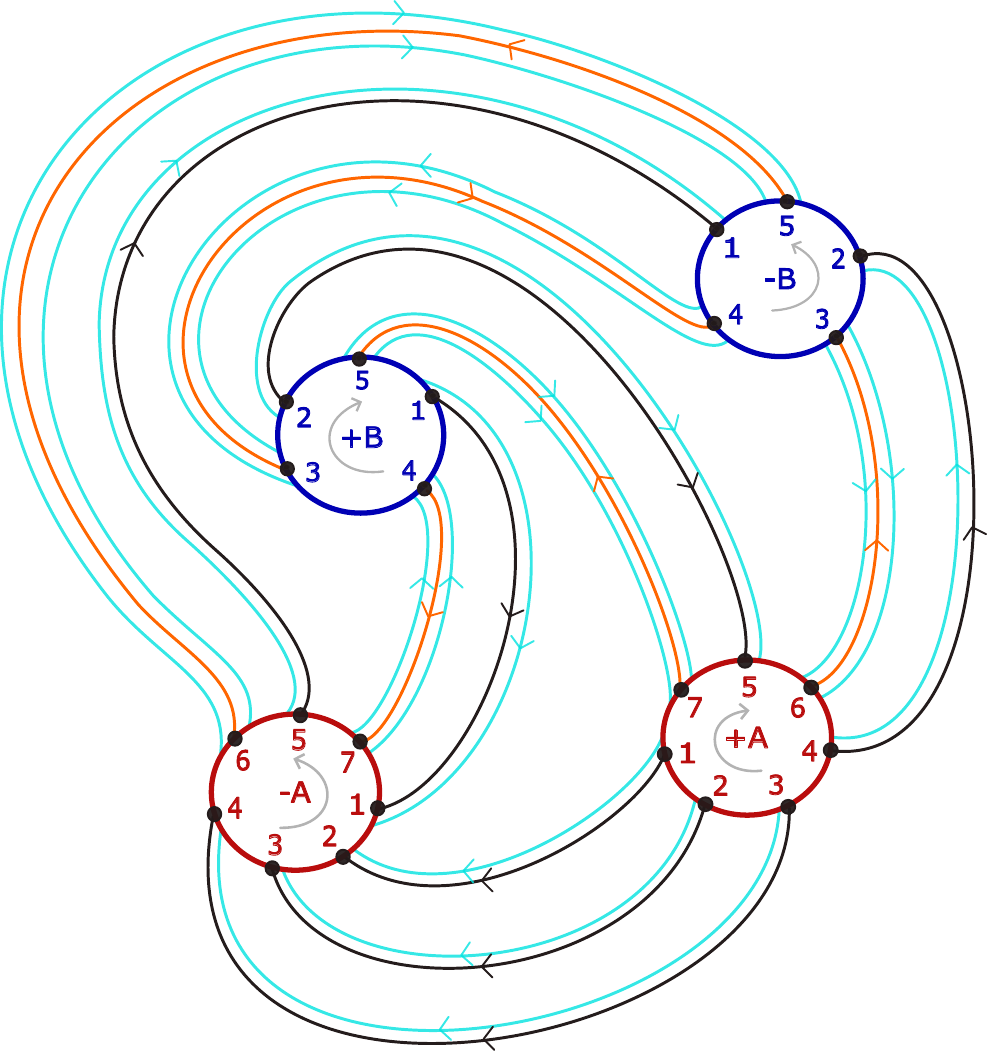} 
        \caption{$L_{E_2} \subset \h \setminus V^{\pm}_i $}
        \label{Poincare1_push}
    \end{minipage}\hfill
    \begin{minipage}{0.45\textwidth}
        \centering
        \includegraphics[width=0.8\textwidth]{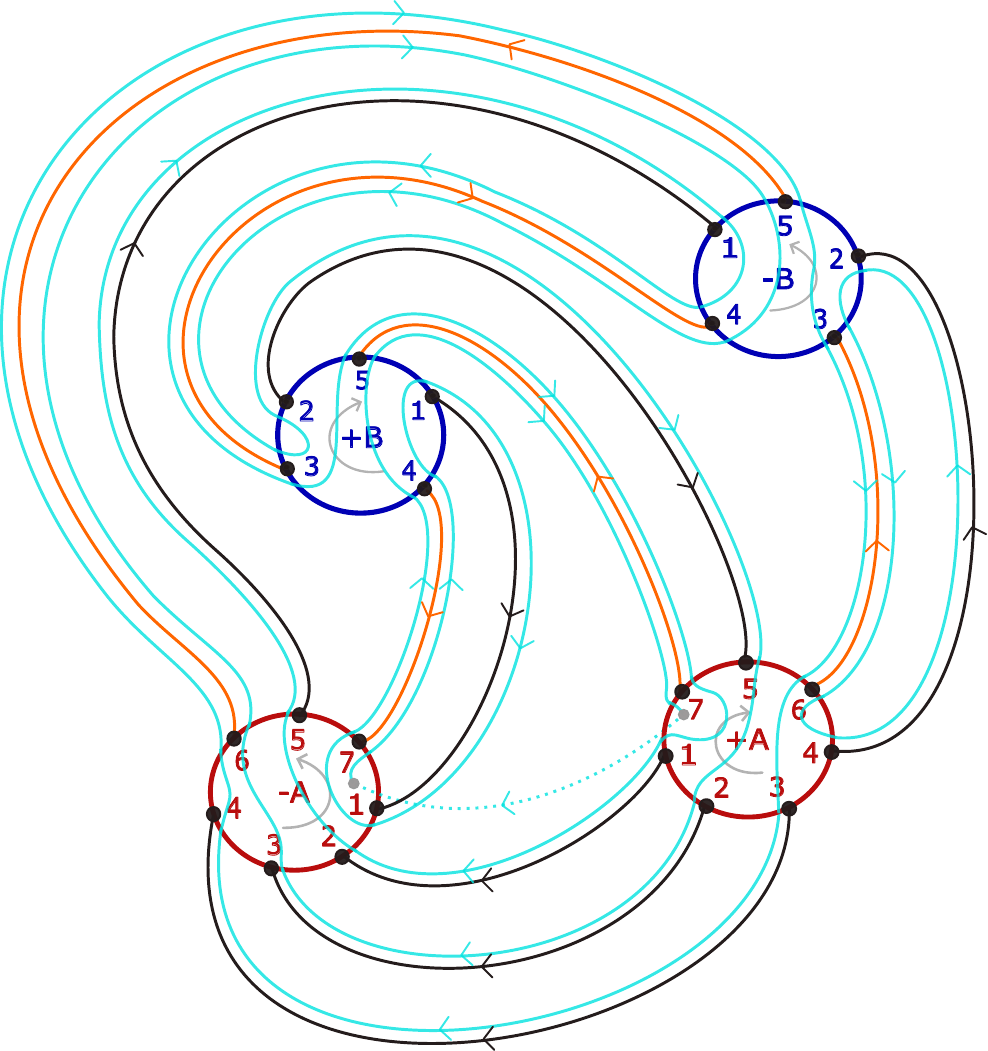} 
        \caption{Pairing paths for $L_{E_2}$}
        \label{Poincare1_bands}
    \end{minipage}
\end{figure}

After resolving one crossing, shown in Figure \ref{Poincare1_circles}, the union of $A_2 \sqcup L_{E_2}$ along with the pairing paths will correspond to an oriented unlink projection with spanning disks shown in Figure \ref{Seifert_circles}. A rectangular 2-disc band is attached, in the associated 1-handle, to each corresponding pair of pairing paths. The resulting surface will be oriented, in normal position, and contained in the 0- and 1- handles of $P^3$. Lastly, since each extension disc used in the construction is contained in $P^3 \setminus A_2$, each boundary component of $L_{E_2}$ can be capped off by an extension disc. The result will be an oriented surface, $\Sigma_{E_2}^2$, in normal position having $A_2$ as its boundary. 

\begin{figure}[ht]
    \centering
    \begin{minipage}{0.45\textwidth}
        \centering
        \includegraphics[width=0.8\textwidth]{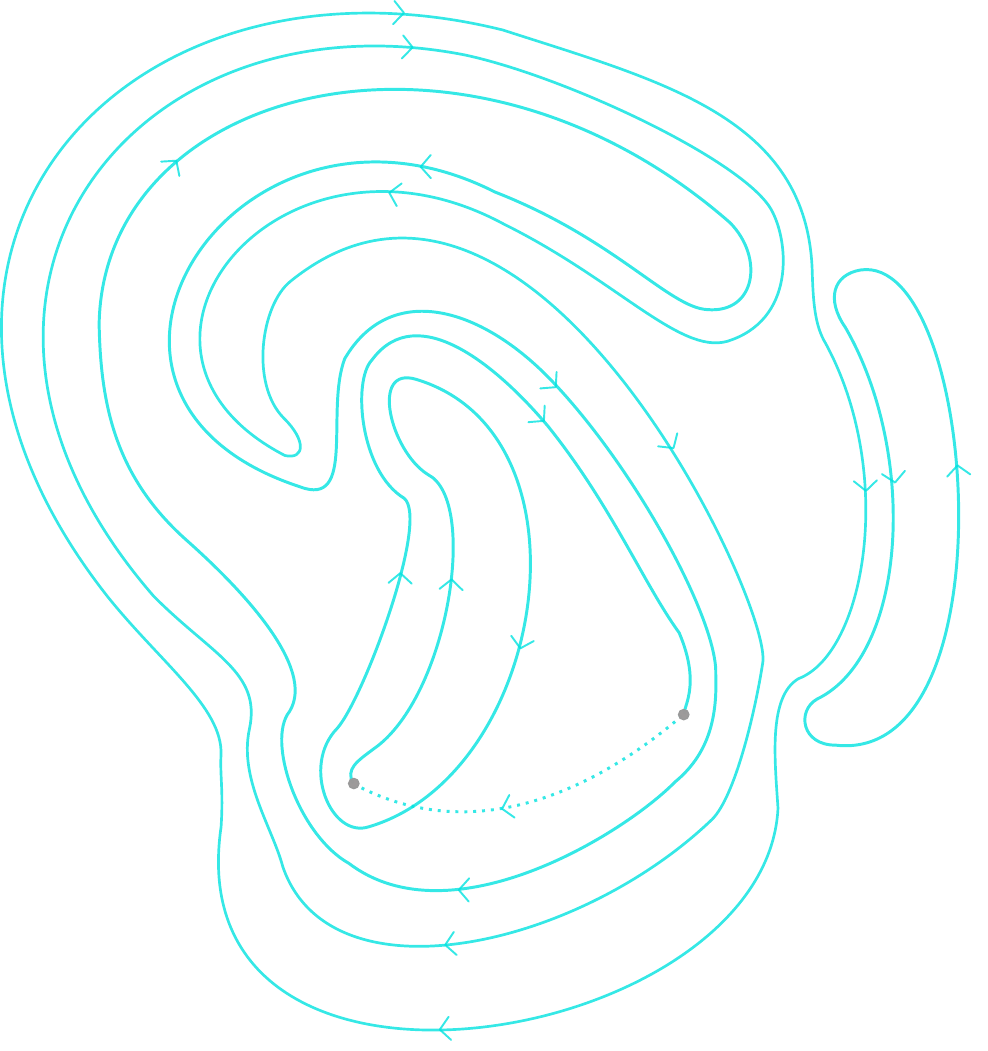} 
        \caption{Seifert Circles for $A_2$}
        \label{Poincare1_circles}
    \end{minipage}\hfill
    \begin{minipage}{0.45\textwidth}
        \centering
        \includegraphics[width=0.8\textwidth]{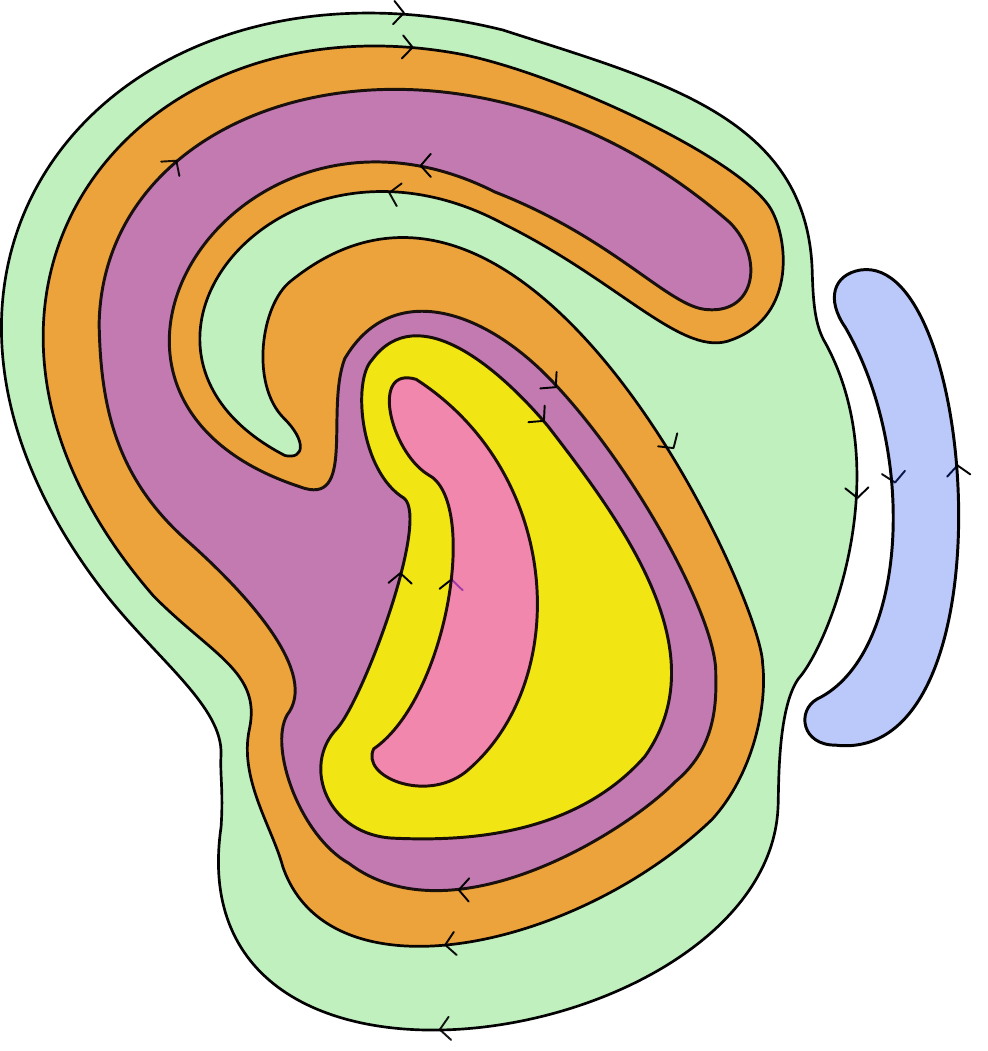} 
        \caption{Disks in $\hand_0$ for $A_2$}
        \label{Seifert_circles}
    \end{minipage}
\end{figure}

The Euler characteristic could be calculated by computing the following: \\
$ \chi(\Sigma_{E_2}^2) = \sum_{i=0}^2 (-1)^i (\# \textrm{ of } i-\textrm{handles}) = 6 - 10 + 3 = -1 .$ \\
Since $\Sigma_{E_2}^2$ has one boundary component, namely $A_2$, this surface is one homeomorphic to a once-punctured torus.

We repeat the same procedure in order to identify the surface that $A_1$ bounds. Since $A_1$ is not balanced, the extension disc needed to balance it is $L_{E_1} = (\bigsqcup_3 -\partial E_1) \bigcup (\bigsqcup_2 \partial E_2)$. Therefore, $A_1 \cup L_{E_1}$ will be balanced and the projection of $L_{E_1}$ onto $\h \setminus V_i^\pm$ can be seen in Figure \ref{Poincare2_push}. We add pairing paths as shown in Figure \ref{Poincare2_bands}. 

\begin{figure}[ht]
    \centering
    \begin{minipage}{0.50\textwidth}
        \centering
        \includegraphics[width=0.8\textwidth]{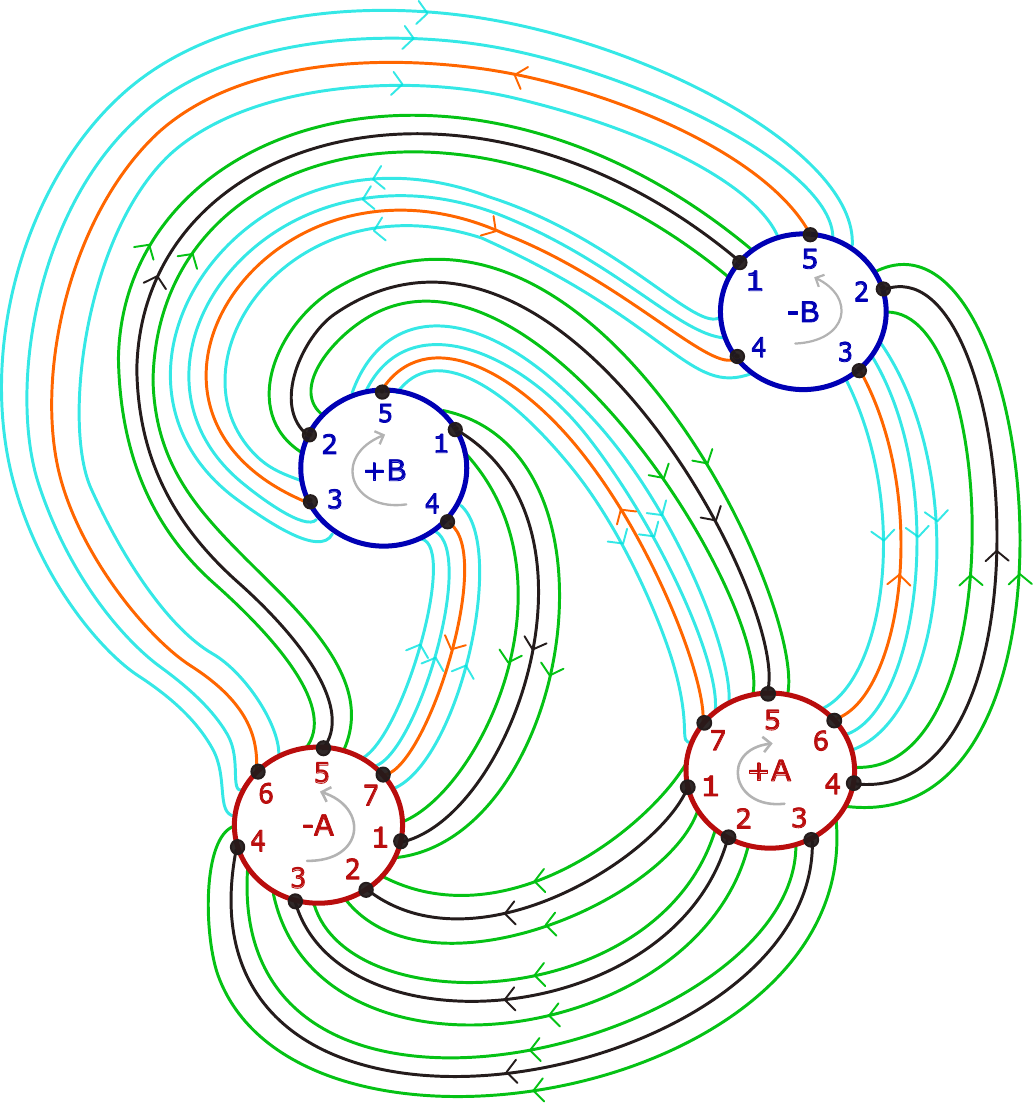} 
        \caption{$L_{E_1} \subset \h \setminus V^{\pm}_i $}
        \label{Poincare2_push}
    \end{minipage}\hfill
    \begin{minipage}{0.50\textwidth}
        \centering
        \includegraphics[width=0.8\textwidth]{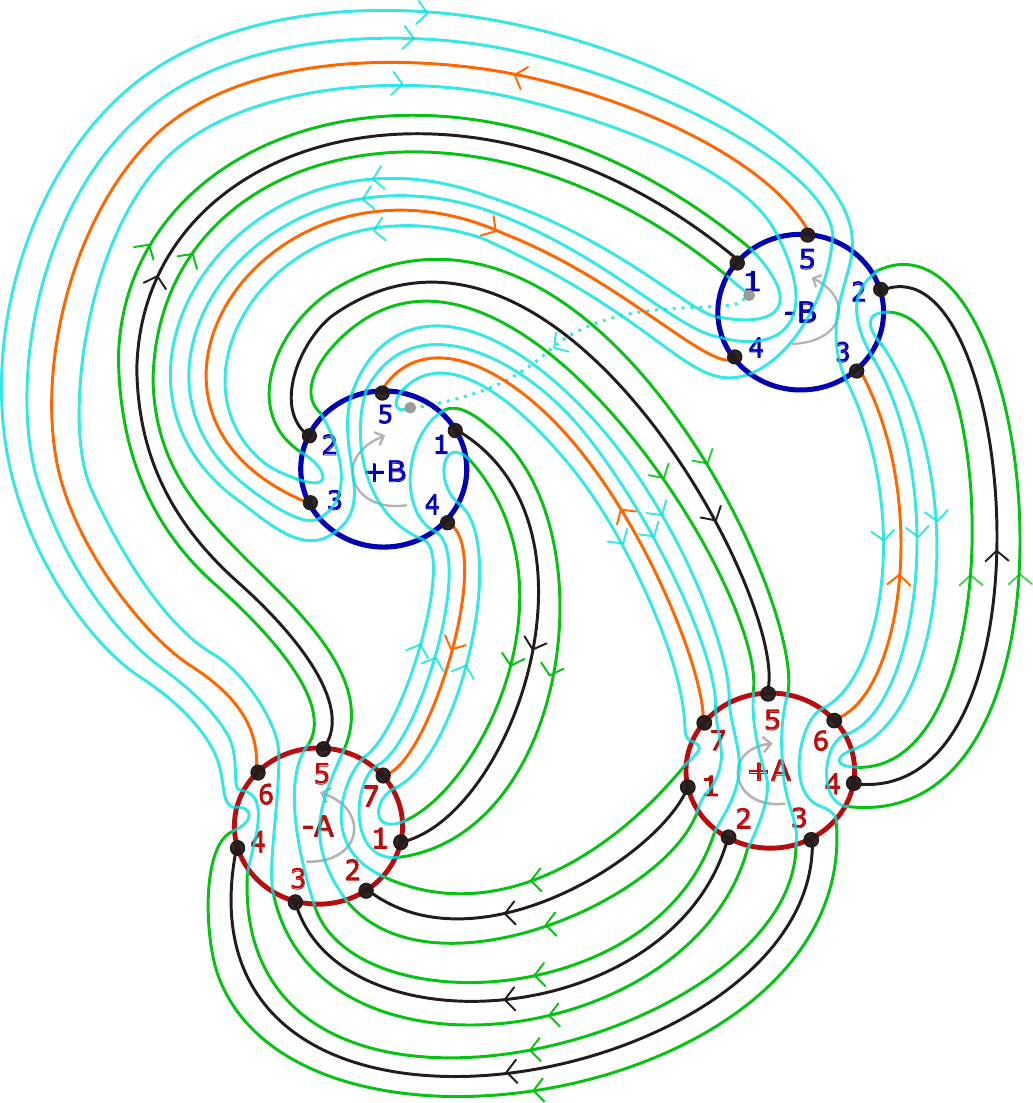} 
        \caption{Pairing paths for $A_1$}
        \label{Poincare2_bands}
    \end{minipage}
\end{figure}

After resolving multiple crossings as seen in Figure \ref{Poincare2_crossings}, the union of $A_1 \sqcup L_{E_1}$ along with the pairing paths will correspond to an oriented unlink projection with spanning disks shown in Figure \ref{Poincare2_colored}. After attaching the corresponding rectangular 2-disc bands, the resulting surface will be oriented, in normal position, and contained in the 0- and 1- handles of $P^3$. After capping off each boundary component of $L_{E_1}$ with an extension disc, the resulting surface, $\Sigma_{E_1}^2$, will be in normal position having $A_1$ as its boundary. The surface will have Euler characteristic $\chi(\Sigma_{E_1}^2) = 11 - 23 + 5 = -7 .$

\begin{figure}[h!]
    \centering
    \begin{minipage}{0.45\textwidth}
        \centering
        \includegraphics[width=0.8\textwidth]{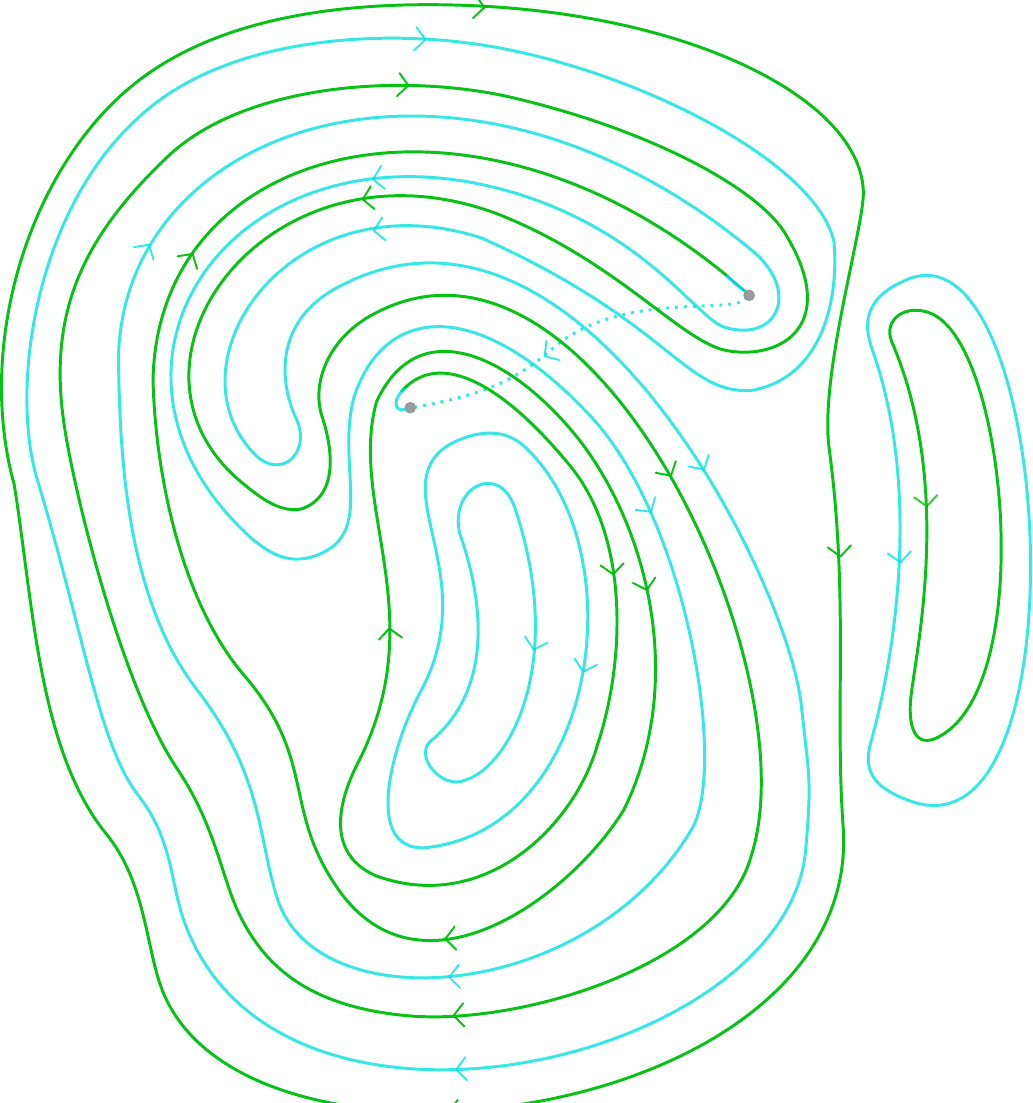} 
        \caption{Seifert Circles for $A_1$}
        \label{Poincare2_crossings}
    \end{minipage}\hfill
    \begin{minipage}{0.45\textwidth}
        \centering
        \includegraphics[width=0.8\textwidth]{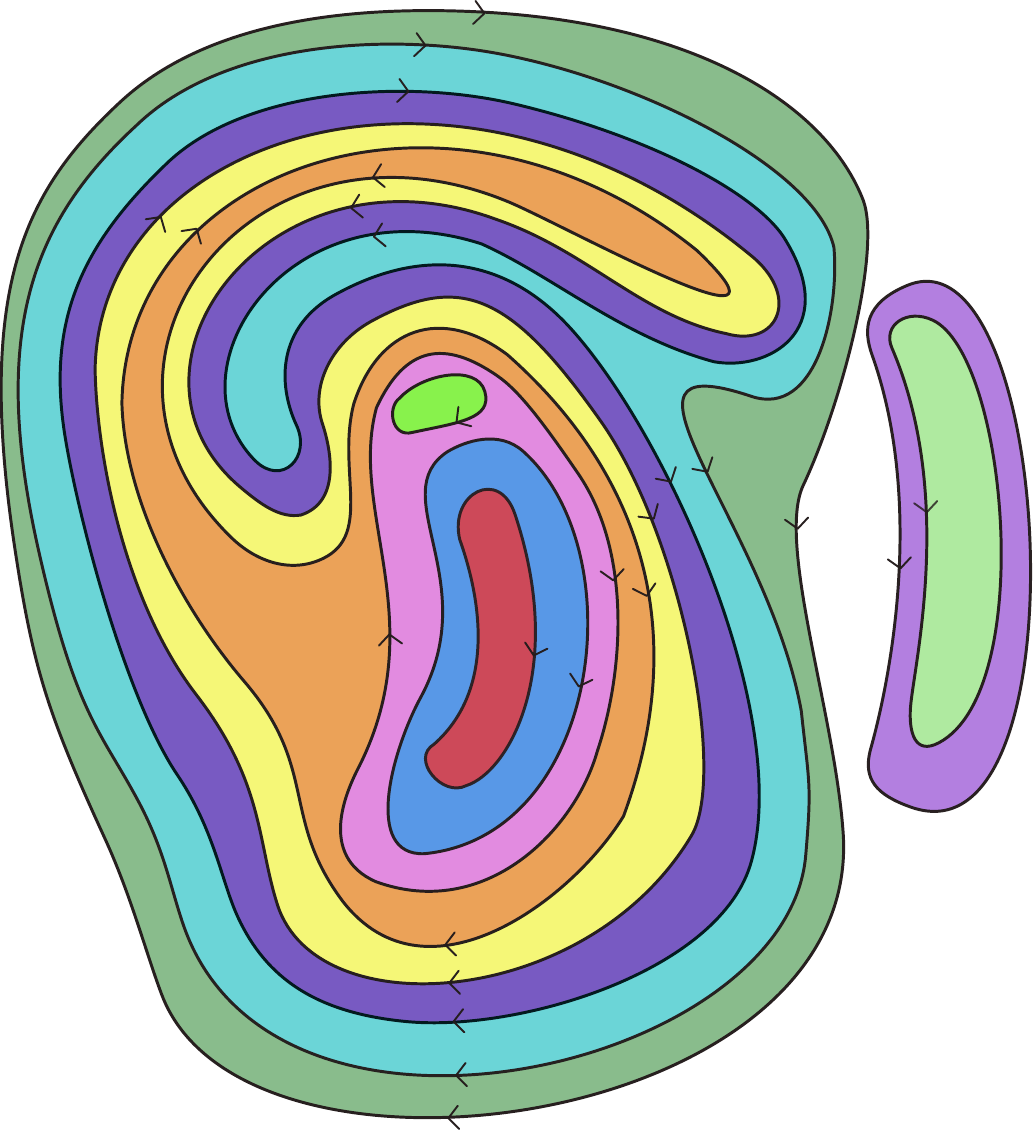} 
        \caption{Discs in $\hand_0$ for $A_1$}
        \label{Poincare2_colored}
    \end{minipage}
\end{figure}

\section{Constructing a Heegaard graph from framed links.}
\label{section: Constructing Heegaard graph}

The Heegaard graph similar to the one in \S\ref{section: P-homology sphere} for the Poincar\'e Homology Sphere is the starting point for performing our generalization of Seifert's algorithm. That Heegaard graph comes from D. Rolfsen's ``one-time'' procedure that starts with $+1$ Dehn surgery on the right-handed trefoil knot and, through a sequence of geometric manipulations, decomposes the $3$-manifold into the corresponding handlebody \cite{Rolfsen}.  It is true that we have a ready supply of integral homology spheres coming from $\pm 1$ Dehn surgery on links in $S^3$.  But, to advance the project of studying knots and links in an arbitrary $3$-manifold using link projections in the Heegaard graph, we will need to systematize Rolfsen's original one-time procedure.  To this end, we now describe how a plat presentation of a specified framed link is readily transformed into a Heegaard graph.  

\begin{figure}[h!]

    \centering
    \begin{minipage}{0.8\textwidth}
        \centering
    \includegraphics[width=1.2\textwidth]{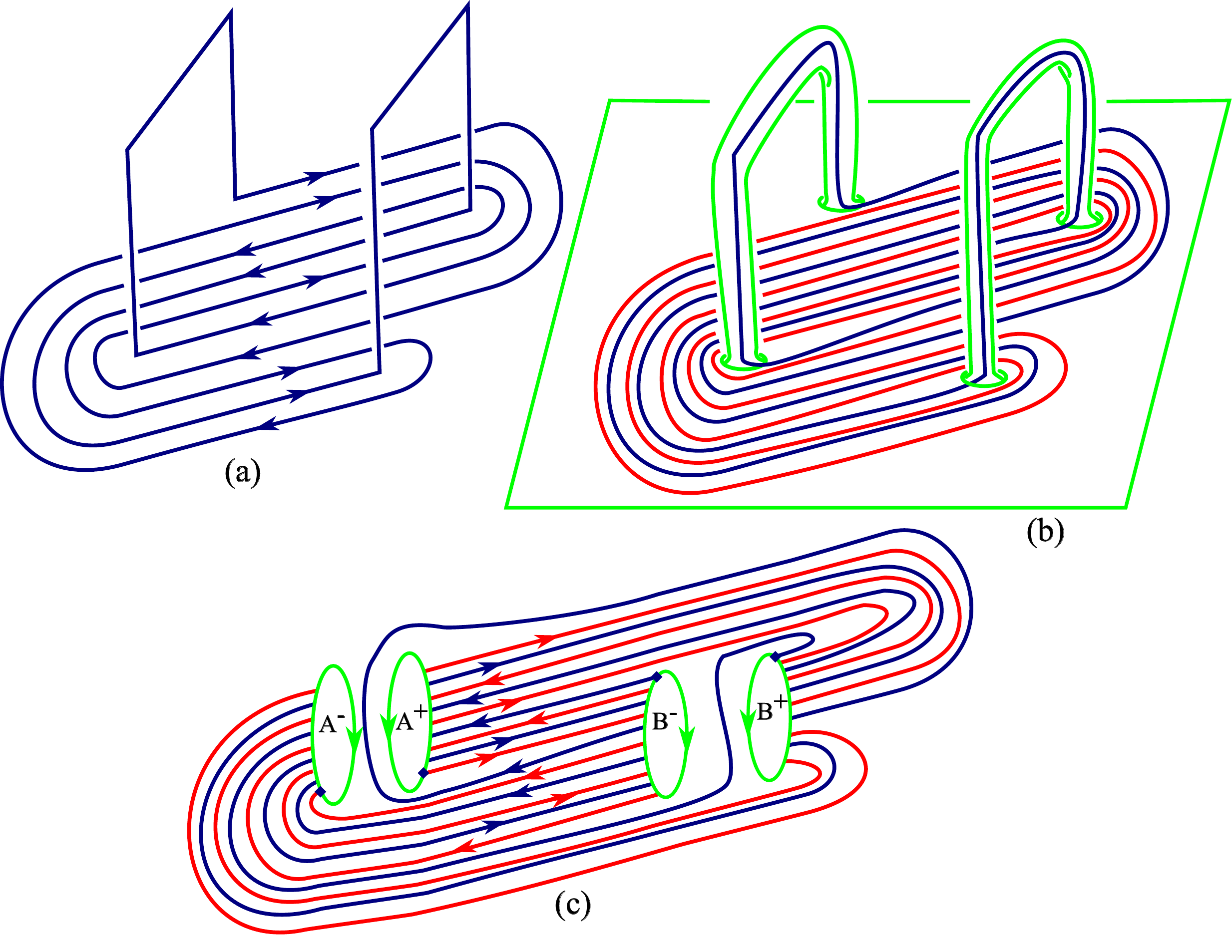}
        \caption{In (a) we depict a plat presentation with $2$ arcs in a plane which are connected above by $2$ bridges.  In (b) we depict the associated genus $2$ splitting surface with $2$ curves (red and blue) that are where the two $2$-handles are attached.  In (c) we depict the associated Heegaard graph.}
        \label{figure: Plat presentation}
        
    \end{minipage}
    
\end{figure}

Our procedure is inspired by the initial figure in \cite{Hatcher-Thurston} which we replicate in Figure \ref{figure: Plat presentation}(a).  Specifically, every link $L$ in $S^3$ has a {\em flat plat presentation}---a decomposition of $L$ into $n$ pairwise disjoint simple arcs in a sphere (or plane) with $n$ (unknotted) bridges positioned ``above'' the sphere connecting the planar arcs together to form the link.  Figure \ref{figure: Plat presentation}(a) illustrates a flat plat presentation for the figure eight knot.  There are $2$ simple arcs in the sphere which are connected by $2$ bridges above the sphere.  It is convenient to depict the $2$ bridges in a rectangular fashion---two vertical edges and one horizontal edge.  This confirms that the bridges are unknotted with respect to the sphere by giving readily identifiable unknotting discs for the bridges---the embedded discs that would be swiped-out by an isotopy that takes the bridges to arcs embedded in the sphere. 

Our procedure for producing a Heegaard graph of the homology sphere produced by $\pm 1$ Dehn surgery on the components of $L$ first requires that we take a flat plat presentation of $L$ and construct a solid handle body of genus $n$, $H_n$.  We will denote the boundary of $H_n$ by $\Sigma_n$.  

Once we have specified $H_n$ we need to go in two different directions.  The first direction is identifying $n$ {\em characteristic curves} in $\Sigma_n$ where $2$-handles will be attached to $H_n$.  The second direction is giving a decomposition of $H_n$ into a single $0$-handle with $n$ $1$-handles attached.

For the first direction, in order to obtain the correct framing for the $n$ characteristic curves, it is convenient to require that the writhe of each component of $L$ correspond to the Dehn surgery that is being applied.  For example, in Figure \ref{figure: Plat presentation}(a) the writhe of the flat plat projection of the figure eight is readily calculated as $+1$.  This will position us to produce a Heegaard graph of a homology sphere that is the result of $+1$ Dehn surgery on the figure eight.  That such a flat plat presentation always exists is a straight forward argument that we leave to the reader.

\noindent
{\bf Identifying $H_n$ and $\Sigma_n$.}  We attach to the sphere $n$ annular tubes, one for each bridge of the plat presentation---one can think of an annular tube as being the boundary of a regular neighborhood of a bridge.  The resulting genus $n$ surface is $\Sigma_n$.  Due to the unknotted nature of the $n$ bridges, it is immediate that $\Sigma_n$ is the boundary of a solid handle body positioned ``above'' the sphere.  Figure \ref{figure: Plat presentation}(b) illustrates the associated genus $2$ surface, $\Sigma_2$.   Our viewpoint is one where we are positioned inside $H_2$.

\noindent
{\bf Identifying characteristic curves in $\Sigma_n$.} Our choice of $n$ characteristic curves on $\Sigma_n$---all of which are non-separating curves---starts with the components of $L$.  $\Sigma_n$ has the property that the components of $L$ are non-separating simple closed curves in $\Sigma_n$.  We observe that the number of components, $|L| \leq n$.  If $|L|=n$ then $L \subset \Sigma_n$ is the required collection of characteristic curves.

If $|L| < n$ then there will be at least $k = n - |L|$ planar arcs, $\{ a_1, \cdots a_k  \}$ of the initial flat plat presentation of $L$ that satisfy the following property $\star \star$: {\sl the two endpoints of $a_i$ are not attached to the same bridge}.  For each such planar arc, we consider the boundary of a regular neighborhood, $\{c_1, \cdots, c_k \}$, all of which are in the sphere that is utilized in our flat plat presentation.  It is readily seen that, except when $n = 2$, the $c_i {\rm 's}$ are in distinct, isotopic classes.  As such, each $c_i$ is positioned away from where we attach the $n$ annular tubes used in the formation of $\Sigma_n$.  By property $\star \star$, each resulting $c_i$ in $\Sigma_n$ will be a non-separating curve.  Our collection of characteristic curves is then all the components of $L$ in $\Sigma_n$ plus a choice of some number of the $c_i {\rm 's}$ so as to give a complete count of $n$ non-separating curves.

Referring the reader to Figure \ref{figure: Plat presentation}(b), the red curve is obtained by looking back at Figure \ref{figure: Plat presentation}(a) and taking the boundary of a regular neighborhood of the planar arc that has the two endpoints, $NE$ and $SW$ (using compass referencing).  Then the two characteristic curves consist of this red curve and the knot that is in blue.

{\bf Decomposition of $H_n$ into a $0$-handle with $n$ $1$-handles.}  We will want to see this handle body as being decomposed into a single $0$-handle with $n$ $1$-handles attached.  Our viewpoint will now be one where we are positioned in the $0$-handle.  To specify the $1$-handles we need only specify the boundary of $n$ co-cores.  These will come from the $n$ discs which characterize the $n$ rectangular bridges as being unknotted.

Using the
two obvious unknotting discs for the two rectangular bridges of Figure \ref{figure: Plat presentation}(b), the reader should now be able to interpret the handle structure of this illustration.  Away from the two annular tubes, one is inside the unique $0$-handle.  When one travels underneath either tube, one is traveling through a $1$-handle.

{\bf Producing the Heegaard graph.}  With the $0$-handle and $1$-handle structure of $H_n$ defined, we are now in a position to produce the associated Heegaard graph.  This is done by splitting $H_n$ along the $n$ $1$-handle co-cores to produce a planar graph.  The vertices of this graph will be the pairs of discs associated with the co-cores.  The edges of this graph correspond to splitting the characteristic curves along their intersections with the $1$-handle co-cores.

Figure \ref{figure: Plat presentation}(c) depicts this splitting procedure coming from our flat plat presentation of the figure eight.  It is a straight forward task to repeat the homological calculation of \S\ref{section: P-homology sphere} and see that the $+1$ writhe framing of Figure \ref{figure: Plat presentation}(a) yields an integral homology sphere.

\section{Uses of a new tool}
\label{section: questions}

{\bf Generalized Alexander Polynomial}--One obvious direction for a generalized Seifert surface algorithm is to carry out each feature of Seifert's original program \cite{Seifert} except now in the setting of an arbitrary homology sphere.  Specifically, coming from the $\pi_0$-regular projection of an oriented link in an $\h$-graph, use the associated Seifert surface in pseudo-normal position to give an effective procedure for calculating an associated ``Seifert matrix''.  Such a (generalized) Seifert matrix will yield an associated ``Alexander polynomial''.
This in fact is the current direction of the first author's work.  The goal is to follow the road map that was laid out in \cite{Seifert} for initially determining the scope of the behavior of such link polynomials.

{\bf Minimal genus spanning surfaces}---A second obvious line of inquiry comes from the algebraic calculation in the proof of Proposition \ref{proposition: extension balanced}.  Specifically, there are ``non-trivial solutions'' to equation (\ref{equation: extension}) coming from the fact that one can add in cancelling extension discs---extension discs of opposite orientation.  Such an addition of extension discs would contribute parallel components to $L_E$ that are oppositely oriented.  However, this adds to the possible choices of the pairing paths used in the proof of Proposition \ref{proposition: balanced}.  So how does the genus of the Seifert surface behave when cancelling extension discs are thrown in?  A reasonable conjecture is that \emph{minimal genus is achieved when $|L_E|$ is minimal}. 


\end{document}